\newtheorem{theorem}{Theorem}[section]
\newcommand{\N}{\mathbb{N}}
\newcommand{\R}{\mathbb{R}}
\newcommand{\C}{\mathbb{C}}
\newcommand{\grad}{\nabla}
\begin{document}
\setlength{\parskip}{1mm}
\setlength{\oddsidemargin}{0.1in}
\setlength{\evensidemargin}{0.1in}
\lhead{}
\rhead{}

\begin{flushleft}
\Large 
\noindent{\bf \Large The direct and inverse problem for sub-diffusion equations with a generalized impedance subregion}
\end{flushleft}

\vspace{0.2in}

{\bf  \large Isaac Harris}\\
\indent {\small Department of Mathematics, Purdue University, West Lafayette, IN 47907 }\\
\indent {\small Email: \texttt{harri814@purdue.edu}}

\vspace{0.2in}

\begin{abstract}
\noindent In this paper, we consider the direct and inverse problem for time-fractional diffusion in a domain with an impenetrable subregion. Here we assume that on the boundary of the  subregion the solution satisfies a generalized impedance boundary condition. This boundary condition is given by a second order spatial differential operator imposed on the boundary. A generalized impedance boundary condition can be used to model corrosion and delimitation. The well-posedness for the direct problem is established where the Laplace Transform is used to study the  time dependent boundary value problem. The inverse impedance problem of determining the parameters from the Cauchy data is also studied provided the boundary of the subregion is known. The uniqueness of recovering the boundary parameters from the Neumann to Dirichlet mapping is proven.  
\end{abstract}

\vspace{0.1in}

\noindent {\bf Keywords}:  Fractional Diffusion $\cdot$ Laplace Transform $\cdot$ Inverse Impedance Problem \\

\noindent {\bf AMS subject classification}: 35R11 $\cdot$ 35R30

\section{Introduction}

Here we are interested in studying the direct and inverse problem for a sub-diffusive partial differential equation in a domain with an impenetrable subregion. To close the system we require that the solution has a given flux on the outer boundary and satisfies a homogeneous generalized impedance boundary condition on the interior boundary. We assume that the model is given by the fractional diffusion equation where the spatial partial differential operator is given by a symmetric elliptic operator. The temporal derivative is given by the Caputo fractional derivative denoted $\partial^{\alpha}_t$ for some fixed $\alpha \in (0,1)$. There has been a lot of interest in the study of sub-diffusive process in recent years see for e.g. \cite{frac-tutorial} and the references therein. It has even been shown in \cite{reg-w-frac} that sub-diffusive processes can be used as a regularization strategy for classical diffusive processes. In general, we have seen in the literature that the generalized impedance boundary condition models complex features such as coating and corrosion. In \cite{delamination} a generalized impedance condition is derived to asymptotically describe delimitation for the acoustic scattering problem. In \cite{gibc-eit} the factorization method is employed to solve the inverse shape problem of recovering an inclusion with a generalized impedance condition from electrostatic data and unique recovery of the impedance coefficients is proven. Recently, in \cite{heat-fm} the factorization method was studied for a heat equation to reconstruct interior cavities. The interior cavity is given by a thermal insulating region which gives a zero flux on the interior boundary. See the manuscript \cite{kirschbook} for an in-depth study of the factorization method applied to inverse scattering problems. Even though it is not considered here the question of employing the factorization method to recover the interior boundary is an interesting open problem for either the heat equation or the sud-diffusive equation.  See \cite{CK-GIBC,2nd-order-inclusion} for other examples of the inverse problem for recovering the impedance coefficients from electrostatic data. Just as in these manuscripts we are interested in the inverse impedance problem of unique recovery of the impedance coefficients. Here we will assume that we have the Cauchy data coming from the fractional diffusion equation. 

Just as in \cite{numeric-heatlt} we will use the Laplace Transform to study the well-posedness of a diffusion equation. In order to prove solvability in the time-domain, we will formally take the Laplace Transform of the time-fractional diffusion equation in question then appealing to Laplace Inversion Formula from Chapter 3 of \cite{sayas-book}. The Laplace and Fourier transforms are very useful tools for studying time-domain problems. In many manuscripts such as \cite{elastic-laplace,numeric-heatlt} the Laplace and Fourier transform are used to prove the solvability of hyperbolic and parabolic equations. This is done by reducing the time-domain to an auxiliary problem in the frequency-domain where one proves well-posedness for the auxiliary problem. In order to establish well-posedness in the time-domain, one must establish explicit bounds on the frequency variable and appeal to the inverse transform. Once in the frequency-domain, one can employ techniques used for elliptic equations.

The rest of the paper is organized as follows. In Section \ref{sect-problem-statement} we rigorously define the direct and inverse problems under consideration.  To due so, we will define the boundary value problem that will be studied as well as the appropriate assumption on the coefficients. Then in Section \ref{sect-direct-problem}, we prove well-posedness of the direct problem by studying the corresponding problem in the frequency domain given by the Laplace Transform of the time dependent problem. Section \ref{sect-inverse-problem} is dedicated to studying the inverse impedance problem of recovering the generalized impedance boundary parameters from the knowledge of the Neumann-to-Dirichlet mapping. Lastly, in the final section, we conclude by summarizing the result from the previous sections and discuss future problems under consideration.

\section{Problem statement}\label{sect-problem-statement}
In this section, we will formulate the direct and inverse problem to be analyzed in Sections \ref{sect-direct-problem} and \ref{sect-inverse-problem}. The problems will be rigorously defined so that we may employ variational methods for solving these problems. We begin by considering the direct problem associated with the sub-diffusion equation with an impenetrable interior inclusion with a generalized impedance boundary condition. Here, we let $D \subset \R^2$ be a simply connected open set with $C^2$-boundary $\Gamma_\text{1}$ with unit outward normal $\nu$. Now let $D_0 \subset D$ be (possible multiple) connected  open set with $C^2$-boundary $\Gamma_\text{0}$, where we assume that $\text{dist}(\Gamma_\text{1} , \overline {D}_0)\geq d>0.$ This given that the annular region $D_1= D \setminus \overline{D}_0$ is a connected set with boundary $\partial D_1 = {\Gamma_\text{1}} \cup {\Gamma_\text{0}}$. See Figure \ref{pic} for example. 
\begin{figure}[ht]
\centering
\includegraphics[scale=0.23]{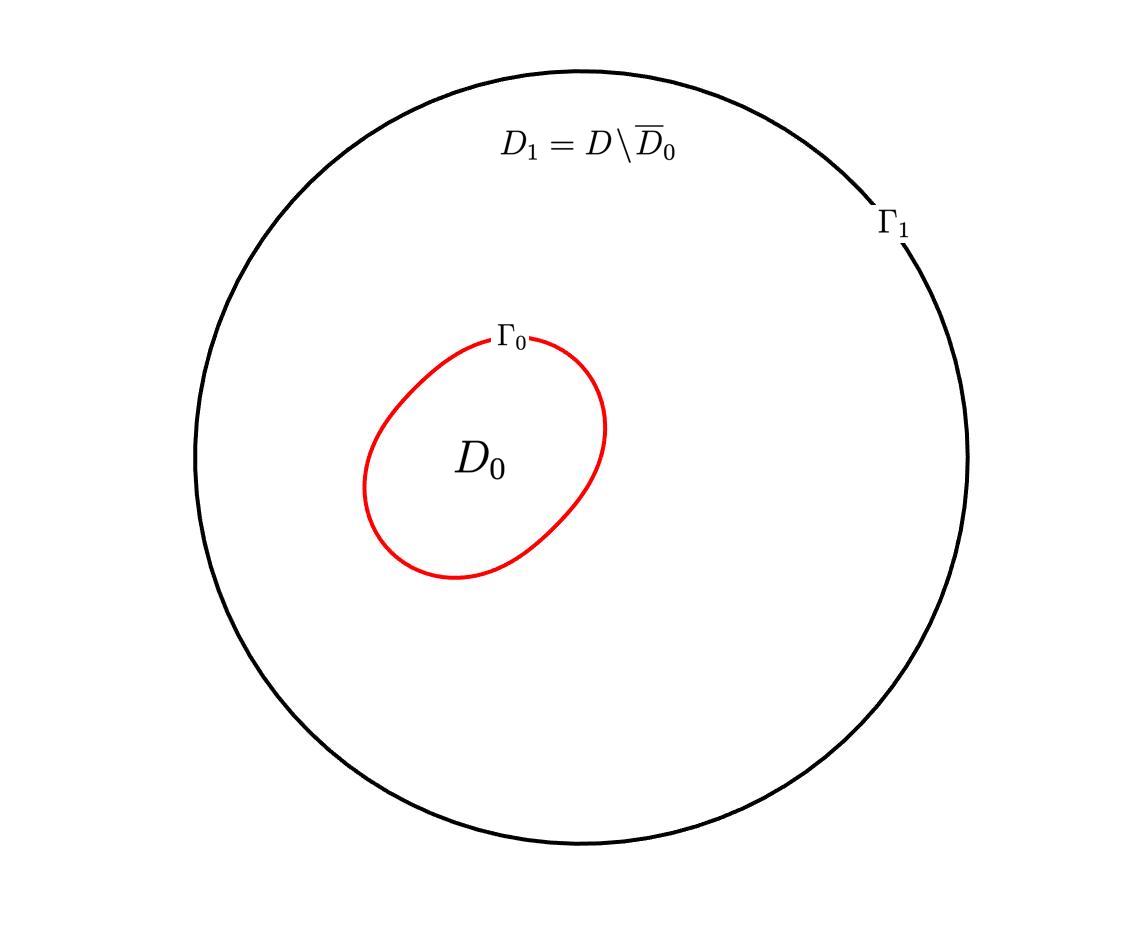}
\caption{Example of a circular domain $D$ with and elliptical subregion $D_0$.}
\label{pic}
\end{figure}

In order to study this problem, we will consider the space of tempered distribution which vanish for $t\leq 0$ (i.e. causal). Now we define $u(x,t)$ as the causal tempered distribution solution to the sub-diffusion equation with a generalized impedance boundary condition that takes values in $H^1(D_1)$ for any $t>0$. The boundary value problem under consideration is given by 
\begin{align}
\partial_t^{\alpha} u= \grad \cdot A(x) \grad u - c(x)u \quad \text{in} \quad D_1\times \R_{+}  \quad \text{with} \quad u(x,t)=0 \quad \text{for all} \,\,\, t\leq 0 \label{direct1}\\
 \partial_{\nu_{A}} u( \cdot \, ,t)  \big|_{\Gamma_\text{1}}= f(x) g(t) \quad \text{and} \quad \mathscr{B} \big[u( \cdot \, ,t)\big] \big|_{\Gamma_\text{0}}=0 \quad \text{for all} \,\,\, t> 0. \label{direct2}
\end{align} 
We will assume that the parameter $\alpha \in (0,1)$ is fixed. The fractional time derivative is assumed to be the Caputo derivative defined by
$$ \partial_t^{\alpha} u =\frac{1}{\Gamma(1-\alpha)} \int\limits_{0}^t \frac{ \partial_\tau u(\cdot \, , \tau)}{(t-\tau)^\alpha} \, \text{d} \tau$$
where $\Gamma(1-\alpha)$ is the Gamma function evaluated at $1-\alpha$. Here the boundary operator in \eqref{direct2} is defined as 
\begin{align}
\mathscr{B} \big[u\big] =  \partial_{\nu_{A}}  u -   \frac{ \text{d} }{\text{d} \sigma}  {\eta}(x)   \frac{\text{d}  }{\text{d} \sigma}  u +  {\gamma}(x) u   \label{GIBC}
\end{align}
where ${\text{d} }/{\text{d} \sigma}$ is the tangential derivative and $\sigma$ is the arc-length parameter on $\Gamma_0$. 
Here we take $\nu$ to be the unit outward normal to the domain $D_1$ and $ \nu \cdot A \grad = \partial_{\nu_{A}}  $ is the corresponding conormal derivative. Also, the generalized impedance boundary condition on the boundary $\Gamma_0$ is understood in the weak sense such that 
$$  0 = \int\limits_{\Gamma_{0} } \overline{\varphi} \partial_{\nu_{A}}u(\cdot ,t) + \eta\,   \frac{\text{d}  u(\cdot ,t)}{\text{d} \sigma}   \frac{\text{d}   \overline{\varphi} }{\text{d} \sigma} + \gamma  \, u(\cdot ,t)  \overline{\varphi} \, \text{d} \sigma \quad \text{ for all } \quad \varphi \in H^1(\Gamma_0) \,\,\,\text{ and } \,\, t>0.$$

To study the problem \eqref{direct1}--\eqref{direct2} we assume that the spatial partial differential operator is symmetric and elliptic. To this end, we let the matrix-valued coefficient  $ {A(x)} \in  C^{0,1} \left( D_1, \R^{2 \times 2} \right)$ be symmetric-positive definite such that 
$$\overline{\xi} \cdot A(x) \xi \geq A_{\text{min}} |\xi|^2 \quad \text{ for a.e } \,\, x\in D_1.$$  
The scalar coefficient $c(x) \in  L^{\infty}(D_1)$ is  such that 
$$c(x) \geq 0\quad \text{ for a.e } \,\, x\in D_1.$$  
Notice that the assumptions on the coefficients give that the differential operator defined by the right hand side of \eqref{direct1} is a symmetric elliptic partial differential operator. The flux on the boundary is given by the separated function $f(x) g(t)$ where $f \in H^{-1/2}(\Gamma_\text{1})$ and $g$ is a piecewise continuous for all $t\geq0$ of exponential order such that $g(t)=0$ for all $t\leq0$. Now, assume that the impedance parameters $\eta \in L^{\infty }(\Gamma_{0})$ and $\gamma \in L^{\infty }(\Gamma_{0})$. For analytical considerations throughout the paper, we will assume that the coefficients satisfy 
$$\eta \geq \eta_{\text{min}} >0 \quad \text{ and } \quad \gamma \geq \gamma_{\text{min}} >0 \quad \text{ for a.e } \,\, x\in \Gamma_{0}.$$
Note that in three spatial dimensions the operator $ \frac{\text{d} }{\text{d} \sigma}  {\eta}   \frac{\text{d} }{\text{d} \sigma}$ is replaced by the Laplace-Beltrami operator $\text{div}_{\Gamma_\text{0}} \big(\eta \text{ grad}_{\Gamma_\text{0}} \big)$ and the analysis in Sections \ref{sect-direct-problem} holds. The analysis in Section \ref{sect-inverse-problem} does not hold in three spatial dimensions. There is little known for the recovery on the impedance parameters in three dimensions. Also, the analysis presented in the following sections can be simply augmented for the classical diffusion process where the fractional derivative is replaced with the classical first order temporal derivative.

For completeness, we will state the result that will be used in Section \ref{sect-direct-problem} to prove well-posedness. This gives a characterization of which analytic functions with values in Banach Space is the Laplace Transform of a causal tempered distribution. To this end, let $\C_{+}= \{ s\in \C \,\, \text{ where } \, \, \text{Re}(s)>0 \}$ and $X$ be a Banach space. Assume that the mapping $\Phi : \C_{+} \longmapsto X$ is an analytic function such that 
\begin{align}
 \| \Phi(s) \|_X \leq \mathcal{C} \big( \text{Re}(s) \big) |s|^\mu \quad \text{with } \quad \mu <-1\label{lt-ref}
 \end{align}
where $\mathcal{C}:(0,\infty) \mapsto (0,\infty)$ is non-increasing with $\mathcal{C}(\sigma)=\mathcal{O}(\sigma^{-l})$ as $\sigma \to 0$ for some $l \in \N$. Then there exists a unique $X$-valued causal tempered distribution $\varphi(t)$ whose Laplace Transform is $\Phi(s)$ see \cite{sayas-book} Chapter 3 for details.

Now that we have formulated the direct problem we now define the inverse problem under consideration. Here we are interested in the inverse impedance problem of determining the boundary operator $\mathscr{B}$ (i.e. the impedance parameters) from the knowledge of the solution $u$ on the outer boundary of $\Gamma_1$. To this end, assume that the temporal function $g$ is fixed and that we have that {\it Neumann-to-Dirichlet } (NtD) mappings denoted by $\Lambda$ that maps 
$$f \longmapsto u ( \cdot \, ,t)  \big|_{\Gamma_\text{1}} \quad \text{for all} \,\,\, t> 0.$$
It is clear $\Lambda$ depends on the boundary parameters and we wish to study the injectivity of the mapping 
$(\eta , \gamma)  \longmapsto \Lambda.$ Since the temporal function $g$ is assumed to be fixed we have that the NtD operator can be viewed as  linear mapping given by 
$$\Lambda f = u ( \cdot \, ,t)  \big|_{\Gamma_\text{1}}$$
for any $f \in H^{-1/2}(\Gamma_\text{1})$. In our analysis, we will assume the knowledge fo the NtD mapping. A similar inverse impedance problems have been considered in \cite{gibc-eit} for the Electrical Impedance Tomography problem.

\section{Analysis of the direct problem }\label{sect-direct-problem} 
To analyze the direct problem we will use the Laplace Transform. Therefore, let X be a Banach Space where we let $\text{TD} \left[ X \right]$ denote the $X$-valued causal tempered distribution with values in the Banach space $X$(see \cite{sayas-book} for details).  In order for the solution $u( \cdot \, ,t)$  of \eqref{direct1}--\eqref{direct2} to be a causal tempered distribution we will assume that $g$ is a causal (real-valued) piecewise continuous function for all $t \geq 0$ of exponential order. This gives that the boundary data $f(x) g(t)$ is a causal tempered distribution with values in $H^{-1/2}(\Gamma_\text{1})$.
We define that Laplace Transform for a causal tempered distribution $w \in \text{TD} \left[ X \right]$ as 
$$ \mathscr{L} \big\{w(t) \big\} = \int\limits_{0}^{\infty} w(t) \text{e}^{-st} \, \text{d}t \quad \text{denoted} \quad W(s)=\mathscr{L} \big\{w(t) \big\} $$
for any $s\in \C_{+}= \{ s\in \C \,\, \text{ where } \, \, \text{Re}(s)>0 \}$. By our assumptions on $g(t)$ we have that the Laplace Transform of the boundary data exists and is given by $f(x)G(s)$ where $\mathscr{L} \big\{g(t) \big\} =G(s)$. We will further assume that there is a constant independent of $s \in  \C_{+}$ where the Laplace Transform for $g$ satisfies 
\begin{align}
|G(s)| \leq  \frac{C}{|s|^p} \quad \text{ for some } \quad p>1 \quad \text{ for all } \quad s \in  \C_{+}. \label{g-decay}
\end{align}

Now we consider the function space for the solution to the direct problem. Due to the generalized impedance condition \eqref{GIBC} we consider the solution as a causal tempered distribution that has values in $H^1( D_1 , \Gamma_\text{0} )$.
Therefore, we wish to show the existence and uniqueness of the solution $u \in \text{TD} \left[ H^1( D_1 , \Gamma_\text{0} ) \right]$ that is the solution to \eqref{direct1}$-$\eqref{direct2} for given boundary data $f(x) g(t) \in \text{TD} \left[ H^{-1/2}(\Gamma_\text{1}) \right]$. We now define the space where for which the we attempt to find the solution as 
$$ H^1( D_1 , \Gamma_\text{0} ) =  \Big\{  \varphi \in {H}^1 (D_1) \quad \text{such that} \quad \varphi \big|_{\Gamma_\text{0}} \in H^1 (\Gamma_\text{0})  \Big\}$$
with that associated norm/inner-product
$$ \| \varphi  \|^2_{H^1( D_1 , \Gamma_\text{0} ) }= \| \varphi  \|^2_{H^1( D_1) }  +\left\| {\varphi}  \right\|^2_{H^1(\Gamma_\text{0})} .$$
It is clear that $H^1( D_1 , \Gamma_\text{0} )$ is a Hilbert Space with the graph norm defined above.  
Here the Sobolev Spaces on the boundary are defined by the dual pairing between $H^{p}(\Gamma_j)$ and $H^{-p}(\Gamma_j)$ (for $p \geq 0$) with $L^2(\Gamma_j)$ as the pivot space where $\Gamma_j$ for $j=0,1$ are the closed curves defined in the previous section.
The definition of the aforementioned Sobolev Spaces can be found in \cite{evans,salsa}.

In order to prove the well-posedness of \eqref{direct1}$-$\eqref{direct2} with respect to any given spatial boundary data $f \in H^{-1/2}(\Gamma_\text{1})$ and fixed causal temporal data $g$ satisfying \eqref{g-decay}, we use the Laplace Transform. We formally take the Laplace Transform of equation \eqref{direct1}$-$\eqref{direct2} and by appealing to the fact that the solution $u$ is causal to obtain 
 \begin{align}
- \grad \cdot A(x) \grad U + (c(x) + s^{\alpha} )U =0 \quad \text{in} \quad D_1 \quad   \text{for all} \quad s \in  \C_{+} \label{lt-direct1}\\
 \partial_{\nu_{A}} U( \cdot \, ; s)  \big|_{\Gamma_\text{1}}= f(x) G(s) \quad \text{and} \quad \mathscr{B} \big[U( \cdot \, ;s)\big] \big|_{\Gamma_\text{0}}=0 \quad \text{for all} \,\,\, s \in  \C_{+}. \label{lt-direct2}
\end{align} 
Here, $U( \cdot \, ; s) $ denotes that Laplace Transform of $u(\cdot \, ,t)$. We have used the fact that 
$$ \mathscr{L} \big\{\partial_t^{\alpha} u( \cdot , t) \big\} = s^{\alpha} U( \cdot \, ; s) $$
by appealing to the definition of the fractional time derivative and the Convolution Theorem for Laplace Transforms. We can consider \eqref{lt-direct1}$-$\eqref{lt-direct2} as the frequency-domain boundary value problem associated with  \eqref{direct1}$-$\eqref{direct2}. Using the  Laplace (or Fourier) Transform to study time-domain problems is commonly done for hyperbolic problems (see for e.g. \cite{elastic-laplace,TDLSM,wavelsm}). To prove the well-posedness of \eqref{direct1}$-$\eqref{direct2} we will need to show that \eqref{lt-direct1}$-$\eqref{lt-direct2} is well-posed and then by appeal to the Laplace Inversion Theorem (\cite{sayas-book} Chapter 3). This means we need to prove that $U( \cdot \, ; s)$ satisfies the estimate \eqref{lt-ref}. To this end, we will employ a variational technique for proving the well-posedness of \eqref{lt-direct1}$-$\eqref{lt-direct2} where we must establish estimates where the dependence on the frequency variable $s\in  \C_{+}$ is explicit. 

We have that, for any given $V \in H^1( D_1 , \Gamma_\text{0} )$ the equivalent variational formulation of \eqref{lt-direct1}$-$\eqref{lt-direct2} is obtained by appealing to Green's 1st Theorem and is given by 
\begin{align}
a_s(U , V) + b(U , V) = \ell_s (V). \label{lt-varform}
\end{align} 
Here the sesquilinear forms $a_s( \cdot \, ,  \cdot )$ and  $b( \cdot \, ,  \cdot ) : H^1( D_1 , \Gamma_\text{0} )^2  \longmapsto \C$ are defined by 
\begin{align}
a_s(U , V) &= \int\limits_{ D_1 }  A(x) \grad U \cdot \grad \overline{V} + (c(x) + s^{\alpha} )U \overline{V}  \, \text{d} x,  \label{a-varform} \\
b(U , V) &= \int\limits_{\Gamma_{0} } \eta\,   \frac{\text{d}  U}{\text{d} \sigma}   \frac{\text{d}  \overline{V} }{\text{d} \sigma} +\gamma  \, U  \overline{V}  \, \text{d} \sigma \label{b-varform}
\end{align} 
and the conjugate linear functional $\ell_s (\cdot ) : H^1( D_1 , \Gamma_\text{0} )  \longmapsto \C$ is defined as 
\begin{align}
\ell_s (V)= G(s) \int\limits_{\Gamma_{1} } f  \, \overline{V}  \, \text{d} \sigma. \label{l-varform}
\end{align} 
It is clear that the sesquilinear forms are continuous for any given $s\in  \C_{+}$ by appealing to the boundedness of the coefficients and the Cauchy-Schwartz inequality. In order to prove the well-posedness we will use the Lax-Milgram Lemma (see \cite{salsa} Theorem 6.5) where the coercivity constant will depend on $s$. Then, in-order to prove that the solution $U( \cdot \, ; s)  \in H^1( D_1 , \Gamma_\text{0} )$ to \eqref{lt-varform} (and therefore \eqref{lt-direct1}$-$\eqref{lt-direct2}) is the Laplace Transform of a tempered distribution $u \in \text{TD} \left[ H^1( D_1 , \Gamma_\text{0} ) \right]$ that solves \eqref{direct1}$-$\eqref{direct2} we prove that the reciprocal of the coercivity constant satisfies the assumption of the Laplace Inversion Formula given by equation (3.2) in \cite{sayas-book}.

\begin{theorem}
The sesquilinear form $a_s( \cdot \, ,  \cdot )$ defined by \eqref{a-varform} satisfies the estimate 
$$| a_s( U,U ) | \geq C \cos(\alpha \pi/2) \text{min}\big(1,Re(s)^\alpha \big) \| U\|_{H^1(D_1)}^2 $$
where $C>0$ is a constant depending only on the coefficient matrix. 
\end{theorem}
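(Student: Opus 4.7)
My plan is to test the sesquilinear form against $V=U$, pass to the real part, and then reduce everything to a single scalar estimate on $\text{Re}(s^{\alpha})$. Setting $V=U$ in \eqref{a-varform} gives
$$
a_s(U,U) \;=\; \int_{D_1} \overline{\nabla U} \cdot A(x) \nabla U \;+\; c(x)|U|^2 \,\text{d}x \;+\; s^{\alpha} \int_{D_1} |U|^2 \,\text{d}x.
$$
By symmetry and ellipticity of $A$ together with $c\geq 0$, the first integral is real and bounded below by $A_{\min}\|\nabla U\|^{2}_{L^2(D_1)}$; the second integral is real and nonnegative. Consequently the only complex ingredient is the scalar factor $s^{\alpha}$, and I would exploit $|a_s(U,U)|\geq \text{Re}\,a_s(U,U)$ to dispose of the phase.

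The heart of the argument (and really the only nontrivial step) is to quantify $\text{Re}(s^{\alpha})$ from below for $s\in\mathbb{C}_+$. Using the principal branch and writing $s=|s|e^{i\theta}$ with $|\theta|<\pi/2$ gives $s^{\alpha} = |s|^{\alpha} e^{i\alpha\theta}$ with $|\alpha\theta| < \alpha\pi/2 \leq \pi/2$; by monotonicity of cosine on $[0,\pi/2]$ and the trivial bound $|s|\geq \text{Re}(s)$,
$$
\text{Re}(s^{\alpha}) \;=\; |s|^{\alpha}\cos(\alpha\theta) \;\geq\; \cos(\alpha\pi/2)\,|s|^{\alpha} \;\geq\; \cos(\alpha\pi/2)\,\text{Re}(s)^{\alpha}.
$$
This is precisely the source of the $\cos(\alpha\pi/2)$ factor in the claim, and is the one inequality the whole argument turns on (it also explains why the estimate degenerates as $\alpha\to 1$).

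Assembling the pieces,
$$
|a_s(U,U)| \;\geq\; A_{\min}\|\nabla U\|^{2}_{L^2(D_1)} \;+\; \cos(\alpha\pi/2)\,\text{Re}(s)^{\alpha}\,\|U\|^{2}_{L^2(D_1)}.
$$
To convert this into an $H^1(D_1)$-norm bound I would pull out $\cos(\alpha\pi/2)\leq 1$ from the first term (which only weakens the inequality), estimate $A_{\min}\|\nabla U\|^{2} + \text{Re}(s)^{\alpha}\|U\|^{2}_{L^2} \geq \min(A_{\min},\text{Re}(s)^{\alpha})\,\|U\|^{2}_{H^1(D_1)}$, and finally set $C=\min(A_{\min},1)$ so that $\min(A_{\min},\text{Re}(s)^{\alpha})\geq C\min(1,\text{Re}(s)^{\alpha})$ (a short three-case check). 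The nonnegative $c$-term is simply discarded. This last block is routine algebra; no estimate there should cause trouble, and $C$ depends only on $A_{\min}$ as claimed.
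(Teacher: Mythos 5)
Your proof is correct and follows essentially the same route as the paper: both arguments hinge on the single inequality $\cos(\alpha\,\mathrm{Arg}(s))\geq\cos(\alpha\pi/2)>0$ for $s\in\C_{+}$, followed by the elementary bound $|s|^{\alpha}\geq \mathrm{Re}(s)^{\alpha}$ and the same $\min(A_{\min},1)\min(1,\mathrm{Re}(s)^{\alpha})$ bookkeeping. The only cosmetic difference is that you take $\mathrm{Re}\,a_s(U,U)$ directly (so the cosine lands on the $s^{\alpha}$ term), whereas the paper first rotates by $\mathrm{e}^{-\mathrm{i}\alpha\mathrm{Arg}(s)}$ (so the cosine lands on the gradient term); both yield the identical final estimate.
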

\begin{proof}
To prove the claim, notice that  
\begin{align*}
\big| a_s( U,U ) \big| & = \big| \text{e}^{- \text{i} \alpha \text{Arg}(s)} a_s( U,U ) \big|\\
			        & \geq \big| \text{Re} \left(\text{e}^{- \text{i} \alpha \text{Arg}(s)} a_s( U,U )  \right)\big|.
\end{align*}
Here $\text{Arg}(s)$ denoted the argument of the complex number (i.e. the angular variable when represented in polar coordinates) such that $s=|s|\text{e}^{\text{i} \alpha \text{Arg}(s)}$. Recall, that fo any $s \in \C_{+}$ which gives that $|\alpha \text{Arg}(s)| \leq \alpha\pi/2$ and therefore 
$$1\geq \cos\left( \alpha \text{Arg}(s) \right) \geq \cos(\alpha\pi/2)>0 \quad \text{for all} \quad \alpha \in (0,1).$$ 
Now using the fact the $c(x) \geq 0$ we can then estimate 
\begin{align*}
\text{Re} \left(\text{e}^{- \text{i} \alpha \text{Arg}(s)} a_s( U,U )  \right) & \geq \cos\left( \alpha \text{Arg}(s) \right) \int\limits_{ D_1 }  A(x) \grad U \cdot \grad \overline{U} \, \text{d} x  +  |s|^{\alpha} \int\limits_{ D_1 } |U|^2 \, \text{d} x \\
			        & \geq \cos(\alpha\pi/2)  \Big[A_{\text{min}} \| \grad U\|^2_{L^2(D_1)} + \text{Re}(s)^\alpha \| U\|^2_{L^2(D_1)} \Big]\\
			        & \geq \cos(\alpha\pi/2) \text{min}(1,A_{\text{min}})  \text{min}\left(1, \text{Re}(s)^\alpha \right)\| U\|_{H^1(D_1)}^2.
\end{align*}
This proves the claim. 
\end{proof}

This gives us an explicit $s$ dependent coercivity estimate in $H^1(D_1)$ for $a( \cdot \, ,  \cdot )$. We now prove a coercivity estimate in $H^1(\Gamma_0)$ for the sesquilinear form $b( \cdot \, ,  \cdot )$ which would imply that the sum of the sesquilinear forms is coercive in $H^1(D_1 , \Gamma_0)$.

\begin{theorem}
The sesquilinear form $b( \cdot \, ,  \cdot )$ defined by \eqref{b-varform} satisfies the estimate 
$$| b( U,U ) | \geq C \cos(\alpha \pi/2) \| U\|_{H^1(\Gamma_0)}^2 $$
where $C>0$ is a constant depending only on the impedance parameters. 
\end{theorem}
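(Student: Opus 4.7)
The plan is to mirror the argument of the preceding theorem by applying the same rotation trick, and then to exploit the pointwise positivity of the impedance coefficients. More precisely, I would begin with the identity
\begin{align*}
|b(U,U)| = \bigl|\text{e}^{-\text{i}\alpha \text{Arg}(s)} b(U,U)\bigr| \geq \text{Re}\bigl(\text{e}^{-\text{i}\alpha \text{Arg}(s)} b(U,U)\bigr).
\end{align*}

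Next, because $\eta$ and $\gamma$ are real-valued with $\eta \geq \eta_{\min} > 0$ and $\gamma \geq \gamma_{\min} > 0$, the quantity $b(U,U)$ is itself real and non-negative. Hence the real part on the right reduces to $\cos(\alpha \text{Arg}(s))\, b(U,U)$, and applying the same elementary bound $\cos(\alpha \text{Arg}(s)) \geq \cos(\alpha\pi/2)$ for $s \in \C_+$ used in the previous theorem yields
\begin{align*}
|b(U,U)| \geq \cos(\alpha\pi/2) \int_{\Gamma_{0}} \eta \left|\frac{\text{d} U}{\text{d}\sigma}\right|^2 + \gamma |U|^2 \, \text{d}\sigma \geq C\cos(\alpha\pi/2)\|U\|_{H^1(\Gamma_0)}^2,
\end{align*}
with $C = \min(\eta_{\min},\gamma_{\min})$, which is exactly the stated claim.

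There is no genuine technical obstacle here; the rotation trick is introduced not because it is needed to control $b(U,U)$ alone --- which is already real and non-negative, so that one could trivially estimate $|b(U,U)| \geq \min(\eta_{\min},\gamma_{\min})\|U\|_{H^1(\Gamma_0)}^2$ --- but so that the resulting bound is additively compatible with the estimate on $a_s(U,U)$ from the preceding theorem. Summing the two then produces joint coercivity of $a_s + b$ on $H^1(D_1,\Gamma_0)$ with a single common factor of the form $\cos(\alpha\pi/2)\min(1,\text{Re}(s)^{\alpha})$, which is the precise shape required both to invoke Lax--Milgram on \eqref{lt-varform} and to obtain the explicit $s$-dependence needed when verifying the hypothesis \eqref{lt-ref} of the Laplace Inversion Theorem. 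The only minor verification is that $b(U,U)$ is real, which is immediate from the realness of $\eta$ and $\gamma$ together with the fact that $|\text{d} U/\text{d}\sigma|^2$ and $|U|^2$ are real.
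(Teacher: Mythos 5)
Your proof is correct and follows essentially the same route as the paper: the rotation by $\text{e}^{-\text{i}\alpha\text{Arg}(s)}$, the bound $\cos(\alpha\text{Arg}(s))\geq\cos(\alpha\pi/2)$, and the pointwise lower bounds on $\eta$ and $\gamma$ giving the constant $\min(\eta_{\text{min}},\gamma_{\text{min}})$. Your added observation --- that $b(U,U)$ is already real and non-negative so the $\cos(\alpha\pi/2)$ factor is not actually needed for $b$ alone, and is retained only for additive compatibility with the estimate on $a_s$ --- is accurate and explains why the stated bound is deliberately weaker than what one could prove.
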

\begin{proof}
Similarly to prove the lower bound we consider 
\begin{align*}
\big| b( U,U ) \big| \geq \big| \text{Re} \left(\text{e}^{- \text{i} \alpha \text{Arg}(s)} b( U,U )  \right)\big|
\end{align*}
where again $\text{Arg}(s)$ is the argument of the complex number $s$. We still have that 
$$1\geq \cos\left( \alpha \text{Arg}(s) \right) \geq \cos(\alpha\pi/2)>0 \quad \text{for all} \quad \alpha \in (0,1).$$ 
Now, using the lower bounds on the impedance parameters 
$$\eta \geq \eta_{\text{min}} >0 \quad \text{ and } \quad \gamma \geq \gamma_{\text{min}} >0 \quad \text{ for a.e } \,\, x\in \Gamma_{0}$$
we have that 
\begin{align*}
\text{Re} \left(\text{e}^{- \text{i} \alpha \text{Arg}(s)} b( U,U )  \right) & \geq \cos\left( \alpha \text{Arg}(s) \right) \left[ \int\limits_{\Gamma_{0} } \eta\,   \left| \frac{\text{d}  U}{\text{d} \sigma}  \right|^2 +\gamma  \, |U|^2   \, \text{d} \sigma \right] \\
			        & \geq \cos(\alpha\pi/2)  \left[\eta_{\text{min}} \left\| \frac{\text{d}  U}{\text{d} \sigma}   \right\|^2_{L^2(\Gamma_{0})} + \gamma_{\text{min}} \left\| U   \right\|^2_{L^2(\Gamma_{0})} \right]\\
			        & \geq \cos(\alpha\pi/2) \text{min}(\gamma_{\text{min}},\eta_{\text{min}}) \| U\|_{H^1(\Gamma_{0})}^2.
\end{align*}
This proves the claim. 
\end{proof}

Notice that the Lax-Milgram Lemma implies that the sesquilinear form given by $a_s( \cdot \, ,  \cdot )+b( \cdot \, ,  \cdot )$ defined by \eqref{a-varform}--\eqref{b-varform}  can be represented by an invertible operator $\mathbb{T}(s)$ that maps  $H^1( D_1 , \Gamma_\text{0} )$ into itself such that 
$$ a_s( U ,  V )+b( U  ,  V ) = \big( \mathbb{T}(s) U , V \big)_{H^1( D_1 , \Gamma_\text{0} )} \quad \text{for all} \quad U,V \in H^1( D_1 , \Gamma_\text{0} ).$$
Since the sesquilinear form $a_s( \cdot \, ,  \cdot )$ is analytic for $s \in \C_{+}$ we have that $\mathbb{T}(s)$ depends analytically on  $s \in \C_{+}$. Now provided that $\mathbb{T}(s)$ is invertible then it's inverse would also depends analytically on $s \in \C_{+}$.

We will now derive a norm estimate for the inverse of $\mathbb{T}(s)$ for any $s \in \C_{+}$ where the dependence on the frequency variable is made explicit. To this end, he lower bounds given in the above results imply that 
$$ \Big| \big( \mathbb{T}(s) U , U \big)_{H^1( D_1 , \Gamma_\text{0} )} \Big| \geq C  \cos(\alpha\pi/2)  \text{min}\left(1, \text{Re}(s)^\alpha \right)\| U\|_{H^1(D_1 , \Gamma_\text{0} )}^2$$
where the constant $C$ is independent of $s \in \C_{+}$. Notice that we have used that 
$$\text{Re} \left(\text{e}^{- \text{i} \alpha \text{Arg}(s)} b( U,U )  \right) \geq C \cos(\alpha \pi/2) \| U\|_{H^1(\Gamma_0)}^2 \geq  C \cos(\alpha \pi/2) \text{min}\big(1,\text{Re}(s)^\alpha \big) \| U\|_{H^1(\Gamma_0)}^2.$$
From the coercivity estimate we have that 
$$ \left\| \mathbb{T}^{-1}(s) \right\|_{\mathcal{B}\big( H^1( D_1 , \Gamma_\text{0}) \big) }\leq \frac{C \sec(\alpha\pi/2) }{\text{min}\left(1, \text{Re}(s)^\alpha \right)} $$
(see \cite{salsa} Theorem 6.5) in the operator norm where $\mathcal{B}\big( H^1( D_1 , \Gamma_\text{0}) \big)$  is the space of bounded linear transformations form $H^1( D_1 , \Gamma_\text{0})$ into itself. Therefore, in the inversion theorem we can conclude that $\mathcal{C} \big( \text{Re}(s) \big)$ from \eqref{lt-ref} is given by 
$$\mathcal{C} \big( \text{Re}(s) \big) =  \frac{C \sec(\alpha\pi/2) }{\text{min}\left(1, \text{Re}(s)^\alpha \right)}$$
Now we derive a norm estimate for the conjugate linear functional $\ell_s (\cdot )$.

\begin{theorem}
The conjugate linear functional $\ell_s (\cdot )$ defined by \eqref{l-varform} satisfies the estimate 
$$| \ell_s( V ) | \leq  C |G(s)| \, \|f\|_{H^{-1/2}(\Gamma_1)} \|V\|_{H^1(D_1,  \Gamma_0)}$$
where $C>0$ is a constant depending only on the domain. 
\end{theorem}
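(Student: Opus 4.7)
The plan is to bound $\ell_s(V)$ by pulling the constant $G(s)$ out front and then interpreting the boundary integral over $\Gamma_1$ as the duality pairing between $H^{-1/2}(\Gamma_1)$ and $H^{1/2}(\Gamma_1)$. First I would write
$$|\ell_s(V)| = |G(s)| \left| \int_{\Gamma_1} f\,\overline{V}\, \mathrm{d}\sigma \right| \leq |G(s)|\, \|f\|_{H^{-1/2}(\Gamma_1)} \, \|V\|_{H^{1/2}(\Gamma_1)},$$
which is the standard duality estimate, valid since $f \in H^{-1/2}(\Gamma_1)$ and the trace of $V$ lies in $H^{1/2}(\Gamma_1)$.

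Next I would invoke the trace theorem for the Lipschitz (indeed $C^2$) domain $D_1$: the trace operator $\gamma_0 : H^1(D_1) \longrightarrow H^{1/2}(\partial D_1)$ is bounded, and restriction to $\Gamma_1 \subset \partial D_1$ (a connected component sitting at a positive distance $d$ from $\Gamma_0$) is also bounded. This yields a constant $C_{\mathrm{tr}}>0$, depending only on $D_1$ (hence on the geometry), such that
$$\|V\|_{H^{1/2}(\Gamma_1)} \leq C_{\mathrm{tr}} \|V\|_{H^1(D_1)}.$$

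Finally, since the graph norm on $H^1(D_1, \Gamma_0)$ dominates the $H^1(D_1)$ norm, namely $\|V\|_{H^1(D_1)} \leq \|V\|_{H^1(D_1, \Gamma_0)}$, chaining the two inequalities gives
$$|\ell_s(V)| \leq C\, |G(s)|\, \|f\|_{H^{-1/2}(\Gamma_1)}\, \|V\|_{H^1(D_1, \Gamma_0)},$$
with $C = C_{\mathrm{tr}}$ depending only on the domain, as required. There is no genuine obstacle here; the only thing to be careful about is that the duality pairing and the trace constant really are independent of the frequency $s \in \mathbb{C}_{+}$, so that later, when combined with the coercivity bound for $a_s + b$ and the decay assumption \eqref{g-decay} on $G(s)$, the dependence on $s$ comes exclusively through the prefactor $|G(s)|$, as is needed for the application of the Laplace inversion theorem.
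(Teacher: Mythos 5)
Your proposal is correct and follows essentially the same route as the paper: the $H^{-1/2}$--$H^{1/2}$ duality pairing on $\Gamma_1$ followed by the Trace Theorem and the trivial domination of the $H^1(D_1)$ norm by the graph norm on $H^1(D_1,\Gamma_0)$. (Your intermediate bound correctly uses $\|V\|_{H^{1/2}(\Gamma_1)}$, where the paper's displayed line has a typo writing $\Gamma_0$.)
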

\begin{proof}
This is a consequence of the Duality between $H^{\pm1/2}$  with $L^2$ as the pivot space and the Trace Theorem (see for e.g. \cite{evans}) which gives that 
\begin{align*}
\big| \ell_s (V) \big| &\leq  |G(s)| \,  \|f\|_{H^{-1/2}(\Gamma_1)} \|V\|_{H^{1/2}(\Gamma_0)}\\
			     & \leq  C |G(s)| \, \|f\|_{H^{-1/2}(\Gamma_1)} \|V\|_{H^1(D_1 , \Gamma_0)}.
\end{align*}
Here $C$ is the constant from the Trace Theorem, proving the claim. 
\end{proof}

By appealing to the Reisz Representation Theorem we can conclude that the variational problem \eqref{lt-varform} is equivalent to 
$$ \mathbb{T}(s) U = L_s \quad \text{ where } \quad \ell_s( V ) = (L_s , V)_{H^1( D_1 , \Gamma_\text{0} )} \,\, \text{for all} \,\, V \in H^1( D_1 , \Gamma_\text{0} ).$$
Where we have that 
$$\| L_s \|_{H^1( D_1 , \Gamma_\text{0} )}   \leq  C |G(s)| \, \|f\|_{H^{-1/2}(\Gamma_0)}.$$ Provided that $G(s)$ depends analytically on $s \in \C_{+}$ we can conclude that $L_s$ depends analytically on $s \in \C_{+}$. This imples that $U=U(\cdot , s) \in H^1( D_1 , \Gamma_\text{0} )$ is given by $U(\cdot , s) = \mathbb{T}(s)^{-1} L_s$ and is therefore analytic with respect to $s \in \C_{+}$. By appealing to the estimate of the norm for the inverse of $\mathbb{T}(s)$ we have that the solution $U$ to the variational problem \eqref{lt-varform} satisfies the norm estimate
\begin{align}
\| U(\cdot , s)   \|_{H^1( D_1 , \Gamma_\text{0} )} \leq \frac{C \sec(\alpha\pi/2) }{\text{min}\left(1, \text{Re}(s)^\alpha \right)} \, |G(s)| \, \|f\|_{H^{-1/2}(\Gamma_1)} \label{varform-wellposed}
\end{align}
where the constant $C>0$ is independent of $s \in \C_{+}$. From the above analysis we have that there is a unique solution to \eqref{lt-direct1}--\eqref{lt-direct2} satisfying the stability estimate \eqref{varform-wellposed}. We recall that \eqref{lt-direct1}--\eqref{lt-direct2} was obtained by taking the Laplace Transform of the time dependent equations \eqref{direct1}--\eqref{direct2}. In order to prove the well-posedness of  \eqref{direct1}--\eqref{direct2} we still need to show that $U(\cdot , s)$ is the Laplace Transform of some casual tempered distribution $u( \cdot \, , t)$ that takes values in $H^1( D_1 , \Gamma_\text{0} )$. To do so, we will appeal to the Laplace Inversion Theorem which can be applied since we have assumed that the Laplace transform for $g(t)$ satisfies \eqref{g-decay}.  Applying equation 3.3 from \cite{sayas-book} gives the following result.

\begin{theorem}\label{well-posed}
Assume that $f\in H^{-1/2}(\Gamma_0)$ and the Laplace Transform of $g(t)$ given by $G(s)$ depends analytically on $s \in \C_{+}$ satisfying \eqref{g-decay}. Then we have that there is a unique solution $u \in \text{TD} \left[ H^1( D_1 , \Gamma_\text{0} ) \right]$ to \eqref{direct1}--\eqref{direct2}. Moreover, we have the estimate 
$$ \| u(\cdot \, , t) \|_{H^1( D_1 , \Gamma_\text{0} )} \leq C t^{\alpha +|1-p|}  \|f\|_{H^{-1/2}(\Gamma_1)} \quad \textrm{ for all } \,\, f\in H^{-1/2}(\Gamma_1)$$
when $t\geq 1$ where the constant $C>0$ is independent of $t$. 
\end{theorem}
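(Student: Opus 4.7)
The plan is to feed the frequency-domain stability estimate \eqref{varform-wellposed} into the abstract Laplace-inversion criterion \eqref{lt-ref} recalled in Section \ref{sect-problem-statement}. All the elliptic work has already been done (coercivity of $a_s + b$ and boundedness of $\ell_s$), so the remaining task is a bookkeeping exercise in the complex variable $s$ followed by a direct appeal to the inversion theorem from Chapter 3 of \cite{sayas-book}.

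First, I would combine \eqref{varform-wellposed} with the hypothesis \eqref{g-decay} to obtain
\begin{align*}
\| U(\cdot,s) \|_{H^1( D_1 , \Gamma_\text{0} )} \leq \frac{C \sec(\alpha \pi/2)}{\min(1, \text{Re}(s)^\alpha)} \, |s|^{-p} \, \|f\|_{H^{-1/2}(\Gamma_1)}
\end{align*}
for every $s \in \C_+$. This matches the template \eqref{lt-ref} with $\mu = -p$ (the requirement $\mu < -1$ being exactly the reason $p > 1$ was imposed in \eqref{g-decay}) and
\begin{align*}
\mathcal{C}(\sigma) = C \sec(\alpha \pi/2) \, \|f\|_{H^{-1/2}(\Gamma_1)} \, \min(1, \sigma^\alpha)^{-1}.
\end{align*}
The function $\mathcal{C}$ is non-increasing on $(0,\infty)$ and satisfies $\mathcal{C}(\sigma) = \mathcal{O}(\sigma^{-\alpha})$ as $\sigma \to 0^+$; since $\alpha \in (0,1)$, the choice $l = 1 \in \N$ suffices in \eqref{lt-ref}.

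Second, I would verify analyticity of $s \mapsto U(\cdot, s)$ on $\C_+$. The only $s$-dependence in $a_s$ is the term $s^\alpha$, which is holomorphic on $\C_+$ under the principal branch, so $\mathbb{T}(s)$ depends analytically on $s$; the coercivity estimate gives analyticity of $\mathbb{T}(s)^{-1}$; and $G(s)$ is analytic by hypothesis, whence $L_s$ and therefore $U = \mathbb{T}(s)^{-1} L_s$ are analytic $H^1( D_1 , \Gamma_\text{0} )$-valued functions on $\C_+$. The Laplace inversion theorem from \cite{sayas-book} then produces a unique causal tempered distribution $u \in \text{TD}[H^1( D_1 , \Gamma_\text{0} )]$ with $\mathscr{L}\{u\} = U$; reversing the formal manipulations that carried \eqref{direct1}--\eqref{direct2} into \eqref{lt-direct1}--\eqref{lt-direct2} (using $\mathscr{L}\{\partial_t^\alpha u\} = s^\alpha U$ and injectivity of the Laplace Transform) shows that $u$ solves the original time-domain boundary value problem.

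For the quantitative time-domain bound, I would apply equation (3.3) of \cite{sayas-book} with the parameters $\mu = -p$ and near-origin singular order $\alpha$ identified above: the resulting polynomial growth rate combines the contribution $-1 - \mu = p - 1 = |1-p|$ coming from the algebraic decay in $|s|$ with the $\alpha$ coming from the near-zero blow-up of $\mathcal{C}(\sigma)$, yielding precisely the advertised $t^{\alpha + |1-p|}$ for $t \geq 1$. The main obstacle is the careful exponent accounting in Sayas's inversion bound, since the singular order $\alpha$ is non-integer and must be interpolated against the integer-valued $l$ permitted by \eqref{lt-ref}; apart from this bookkeeping, the proof is a direct assembly of the results already established in Theorems~3.1, 3.2 and 3.3.
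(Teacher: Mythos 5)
Your proposal is correct and follows exactly the route the paper intends: feed the stability estimate \eqref{varform-wellposed} together with \eqref{g-decay} into the inversion criterion \eqref{lt-ref}, check analyticity of $s \mapsto U(\cdot,s)$, and read off the $t^{\alpha+|1-p|}$ growth from equation (3.3) of \cite{sayas-book}, with the exponent accounting ($-1-\mu = p-1$ from the contour integral, $\alpha$ from $\mathcal{C}(1/t)$) matching the paper's claim. The paper gives no more detail than "a direct consequence of the previous analysis along with the strong inversion formula," so your write-up is simply a fleshed-out version of the same argument.
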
 

Notice that Theorem \ref{well-posed} gives that there is a solution $u( \cdot \, , t)$ to \eqref{direct1}--\eqref{direct2} that has at most polynomial growth in $t$. The proof of Theorem \ref{well-posed} is a direct consequence of the previous analysis in this section along with the strong inversion formula for the Laplace Transform. The polynomial growth will play a role in an estimate in the proceeding section to study the inverse problem.

\section{Analysis of the inverse problem }\label{sect-inverse-problem}  
In this section, we consider the inverse impedance problem of recovering the impedance parameters $\eta$ and $\gamma$ from the Cauchy data. These types of inverse problems have applications where one needs to infer about the interior structure of a medium from boundary measurements. These problems are frequently found in engineering applications of non-destructive testing. The mathematical questions are  uniqueness, existence, and continuity with respect to the given measurements as well as developing numerical inversion algorithms. These questions have been studied for the elliptic problem coming from Electrical Impedance Tomography in \cite{CK-GIBC,2nd-order-inclusion,gibc-eit} were uniqueness results are given as well as numerical methods for recovering the impedance parameters. Note that the generalized impedance condition given in \eqref{GIBC} depends on the material parameters $\eta$ and $\gamma$ linearly. Therefore, one hopes to derive a direct algorithm for recovering the coefficients. This is useful since it would not require initial estimate on the material parameters. In \cite{gibc-eit} this is done in the case of Electrical Impedance Tomography as well as developed a factorization method for recovering the interior boundary. Whereas in \cite{CK-GIBC} a system of non-linear boundary integral equations is used to recover the impedance parameters and interior boundary $\Gamma_0$. Here we will only focus on the question of uniquely determining the impedance parameters on the interior boundary from measurement on the exterior boundary. 

To begin, we assume that the temporal part of the flux $g(t)$ is a causal tempered distribution that is again {\it fixed} such that it's Laplace Transform $G(s)$ is well-defined and depends analytically on $s \in \C_{+}$ satisfying \eqref{g-decay}. Therefore, by Theorem \ref{well-posed} we have that there is a unique solution  $u$ to \eqref{direct1}--\eqref{direct2} that is a  causal tempered distribution that takes values in $H^1( D_1 , \Gamma_\text{0} )$ for all $t>0$. Then we consider the {\it Neumann-to-Dirichlet } (NtD) mappings denoted by $\Lambda$ that maps 
$$H^{-1/2}(\Gamma_1) \to \text{TD} \left[H^{1/2}(\Gamma_1) \right]$$
such that 
$$f \longmapsto u ( \cdot \, ,t)  \big|_{\Gamma_\text{1}} \quad \text{for all} \,\,\, t> 0.$$
By appealing to Theorem \ref{well-posed} and the Trace Theorem we have that the NtD operator is a well defined linear operator. The main idea in this section is to extend the theory developed in \cite{gibc-eit} for the Electrical Impedance Tomography problem for our inverse problem by appealing to the Laplace Transform. This employs variational techniques to prove the uniqueness of the coefficients from the knowledge of the NtD mapping. Since variational techniques are used less regularity is needed in the analysis than in \cite{CK-GIBC} but one requires the knowledge of the full NtD mapping. We will assume that the NtD mapping is known for any $f\in H^{-1/2}(\Gamma_1)$ and for all $t>0$ denoted
$$ \Lambda=\Lambda(\eta , \gamma) \quad \text{ with } \quad \Lambda f =  u ( \cdot \, ,t)  \big|_{\Gamma_\text{1}} \quad \text{for all} \,\,\, t> 0.$$
Since the NtD mapping is known for all $t>0$ we can consider the Laplace Transform of the NtD mapping  
$$ \mathscr{L} \big\{ \Lambda f \big\} = \int\limits_{0}^{\infty} u ( \cdot \, ,t)  \big|_{\Gamma_\text{1}} \text{e}^{-st} \, \text{d}t$$ 
that maps 
$$ f \longmapsto U(\cdot \, , s) \big|_{\Gamma_\text{1}} \quad \text{for any} \,\,\,s \in \C_{+}$$
where $U$ is the solution to \eqref{lt-direct1}$-$\eqref{lt-direct2}. Since $U$ solves an elliptic problem it is easier to study the uniqueness in the frequency-domain which would imply uniqueness in the time-domain by the inversion formula.  Before we can prove the main uniqueness result we first prove an auxiliary density result.

\begin{theorem}\label{dense-set}
Define the set 
$$\mathcal{U}=\Big\{ U \big|_{\Gamma_0} \, :  \,  U \in {H}^1( D_1,\Gamma_0)  \, \text{  solving  }\eqref{lt-direct1}-\eqref{lt-direct2} \,  \text{ for all }  \, f \in  H^{-1/2}(\Gamma_1) \Big\} \subset H^1(\Gamma_0).$$
Then $\mathcal{U}$ is a dense subspace of $L^2(\Gamma_0)$ for any $s \in \R_{+}$ such that $G(s) \neq 0$. 
\end{theorem}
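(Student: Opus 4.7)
The natural strategy is a duality argument: it suffices to show that the annihilator of $\mathcal{U}$ in $L^2(\Gamma_0)$ is trivial. Fix $s \in \R_{+}$ with $G(s) \neq 0$ and suppose $\phi \in L^2(\Gamma_0)$ satisfies $\int_{\Gamma_0} U \overline{\phi}\, \text{d}\sigma = 0$ for every $U \in \mathcal{U}$; the goal is to prove $\phi \equiv 0$. To this end, I introduce the adjoint boundary value problem: find $W \in H^1(D_1, \Gamma_0)$ such that
\begin{align*}
a_s(W, V) + b(W, V) = \int_{\Gamma_0} \phi \overline{V}\, \text{d}\sigma \quad \text{for all } V \in H^1(D_1, \Gamma_0).
\end{align*}
Since $\phi \in L^2(\Gamma_0)$ and the trace into $H^1(\Gamma_0) \hookrightarrow L^2(\Gamma_0)$ is continuous, the right-hand side is a bounded conjugate-linear functional on $H^1(D_1,\Gamma_0)$. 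Combining the coercivity estimates from the previous two theorems with the Lax--Milgram lemma yields a unique $W$, which interpreted strongly solves $-\grad \cdot A \grad W + (c + s^{\alpha})W = 0$ in $D_1$ together with $\partial_{\nu_A} W|_{\Gamma_1} = 0$ and $\mathscr{B}[W]|_{\Gamma_0} = \phi$ in the weak sense.

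Next, I exploit symmetry: for $s \in \R_{+}$ both $s^{\alpha}$ and $G(s)$ are real, and the coefficients $A, c, \eta, \gamma$ are real-valued, so $a_s$ and $b$ are Hermitian, i.e.\ $a_s(U, W) = \overline{a_s(W, U)}$ and similarly for $b$. Testing $U$'s variational formulation \eqref{lt-varform} with $V = W$ and the adjoint formulation for $W$ with $V = U$, then taking the complex conjugate of the second identity, I obtain
\begin{align*}
G(s) \int_{\Gamma_1} f\, \overline{W}\, \text{d}\sigma = a_s(U, W) + b(U, W) = \int_{\Gamma_0} \overline{\phi}\, U\, \text{d}\sigma = 0,
\end{align*}
where the last equality is the annihilation hypothesis (applied after conjugation). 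Since $G(s) \neq 0$ and $f \in H^{-1/2}(\Gamma_1)$ is arbitrary, the non-degeneracy of the $H^{-1/2}$--$H^{1/2}$ duality pairing forces $W|_{\Gamma_1} = 0$.

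At this point $W \in H^1(D_1)$ solves a uniformly elliptic equation with Lipschitz principal part and has vanishing Cauchy data $W|_{\Gamma_1} = 0 = \partial_{\nu_A} W|_{\Gamma_1}$ on the smooth outer boundary $\Gamma_1$. The classical unique continuation property for second-order elliptic operators then forces $W \equiv 0$ in $D_1$ (extending $W$ by zero across $\Gamma_1$ one obtains a weak solution that vanishes on an open set, and Aronszajn's theorem applies). Consequently $\mathscr{B}[W]|_{\Gamma_0} = 0$, and comparison with the adjoint boundary identity $\mathscr{B}[W]|_{\Gamma_0} = \phi$ gives $\phi = 0$, completing the proof. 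The main subtlety is the low boundary regularity of the datum $\phi \in L^2(\Gamma_0)$: the identity $\mathscr{B}[W] = \phi$ must be interpreted in the variational sense built into the adjoint problem, and one must carefully track the Hermitian symmetry that requires both $s$ and $G(s)$ to be real -- which is exactly why the density claim is made only for $s \in \R_+$.
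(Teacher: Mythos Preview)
Your proof is correct and follows essentially the same duality/adjoint-problem strategy as the paper: both introduce an auxiliary solution $W$ with $\partial_{\nu_A}W|_{\Gamma_1}=0$ and $\mathscr{B}[W]|_{\Gamma_0}=\phi$, use symmetry to deduce $W|_{\Gamma_1}=0$, apply unique continuation, and conclude $\phi=0$. The only cosmetic difference is that you carry out the symmetry step directly at the variational level via the Hermitian property of $a_s+b$ for real $s$, whereas the paper passes to the strong formulation and invokes Green's second identity.
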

\begin{proof}
It is clear that the mapping $f \mapsto U(\cdot , s) \big|_{\Gamma_0}$ is linear since it is the composition of the solution operator for \eqref{lt-direct1}$-$\eqref{lt-direct2} and the Trace operator. This implies the $\mathcal{U}$ defines a linear subspace of $L^2(\Gamma_0)$. Now to prove the claim we will show that the set $\mathcal{U}^{\perp} =\{0\}$. To this end, we let $\varphi \in \mathcal{U}^{\perp}$ and let $V \in {H}_0^1 (D_1,\Gamma_\text{1})$ be the solution to the dual problem 
 \begin{align*}
- \grad \cdot A(x) \grad V + (c(x) + \overline{s}^{\alpha} )V =0 \quad \text{in} \quad D_1 \quad   \text{for all} \quad s \in  \C_{+}\\
 \partial_{\nu_{A}} V  \big|_{\Gamma_\text{1}}=  0  \quad \text{and} \quad \mathscr{B} \big[V\big] \big|_{\Gamma_\text{0}}=\varphi \quad \text{for all} \,\,\, s \in  \C_{+}. 
\end{align*} 
It is clear that there is a unique solution $V \in {H}_0^1 (D_1,\Gamma_\text{0})$ to the dual problem above by appealing to similar arguments as in Section \ref{sect-direct-problem}. Now let $s \in \R_{+}$ such that $G(s) \neq 0$. Therefore, we obtain that 
\begin{align*}
0& =  \int\limits_{\Gamma_0} U \,  {\varphi} \, \text{d}s =  \int\limits_{\Gamma_0} U\, \mathscr{B} \big[V\big] \, \text{d}s  \\
&=\int\limits_{\Gamma_0} U \partial_{\nu_A} V - V \partial_{\nu_A} U \, \text{d}s  \\
&= - \int\limits_{\Gamma_1} U \partial_{\nu_A} V - V \partial_{\nu_A} U \, \text{d}s \\
&= G(s) \int\limits_{\Gamma_1} f \, V \, \text{d}s  \quad \text{for all } \quad f \in H^{-1/2} (\Gamma_\text{1})
\end{align*}
where we have used Green's 2nd Theorem.  
Due to the duality of $H^{\pm1/2}$ the Han-Banach Theorem implies that $V=0$ on $\Gamma_1$. Since $V$ has zero Cauchy data on $\Gamma_1$ we can conclude that $V=0$ in $D_1$. The generalized impedance boundary condition implies that $\varphi = 0$, proving the claim. 
\end{proof}

In order to prove the uniqueness result, we will require that the impedance parameters $(\eta , \gamma) \in C(\Gamma_0) \times L^{\infty} (\Gamma_0)$. Even though less regularity is needed to prove the well-posedness of the problem we will see that the increased regularity is needed for the proof of the uniqueness result presented in this section. This is not uncommon that the well-posedness can be established for weaker assumptions on the coefficients. The extra regularity for $\eta$ is expected since it turns up in the second order differential operator on the boundary. This is standard in the analysis of PDEs just as in standard elliptic regularity results \cite{evans}. 

\begin{theorem} \label{unique} 
Let $\Lambda$ be the Neumann-to-Dirichlet operator for \eqref{direct1}$-$\eqref{direct2} such that 
$$f \longmapsto u ( \cdot \, ,t)  \big|_{\Gamma_\text{1}} \quad \text{for all} \,\,\, t> 0.$$
Then the mapping $(\eta , \gamma) \mapsto \Lambda(\eta , \gamma)$ is injective provided that $(\eta , \gamma) \in C(\Gamma_0) \times L^{\infty} (\Gamma_0)$.   
\end{theorem}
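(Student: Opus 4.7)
The plan is a three-stage argument: (a) reduce the time-domain identification of $(\eta,\gamma)$ to identification at a single well-chosen frequency by taking the Laplace transform of $\Lambda_1 f = \Lambda_2 f$; (b) propagate the resulting equality of Dirichlet traces on $\Gamma_1$ into equality of the full Cauchy pair on $\Gamma_0$ via unique continuation; (c) pit the algebraic constraint that $\tilde\eta := \eta_1-\eta_2$ and $\tilde\gamma := \gamma_1-\gamma_2$ impose on each admissible boundary trace against the density statement of Theorem~\ref{dense-set} to force $\tilde\eta \equiv 0$ and $\tilde\gamma\equiv 0$.

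For (a)--(b): suppose $\Lambda(\eta_1,\gamma_1)=\Lambda(\eta_2,\gamma_2)$, let $U_i(\cdot,s)$ be the frequency-domain solution of \eqref{lt-direct1}--\eqref{lt-direct2} with parameters $(\eta_i,\gamma_i)$, and take the Laplace transform in $t$ of $\Lambda_1 f=\Lambda_2 f$ to obtain $U_1(\cdot,s)|_{\Gamma_1}=U_2(\cdot,s)|_{\Gamma_1}$ for every $f\in H^{-1/2}(\Gamma_1)$ and every $s\in \C_+$. Because $G$ is analytic on $\C_+$ and not identically zero, its zero set on $\R_+$ is at most countable, and I fix some $s_0\in\R_+$ with $G(s_0)\neq 0$. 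The difference $W=U_1(\cdot,s_0)-U_2(\cdot,s_0)$ then solves the homogeneous equation $-\nabla\cdot A\nabla W+(c+s_0^\alpha)W=0$ in $D_1$ with both $W|_{\Gamma_1}=0$ (by the NtD hypothesis) and $\partial_{\nu_A}W|_{\Gamma_1}=0$ (the Neumann data $fG(s_0)$ is common to both $U_i$). Standard weak unique continuation for symmetric second-order elliptic operators with Lipschitz principal part and bounded zeroth-order coefficient (Carleman-estimate based) gives $W\equiv 0$ in $D_1$. Thus $U_1(\cdot,s_0)$ and $U_2(\cdot,s_0)$ coincide in $D_1$; call this common solution $U$, whose trace $U|_{\Gamma_0}\in\mathcal U$ and whose conormal derivative on $\Gamma_0$ are shared by both.

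For (c): subtracting the two weak generalized impedance conditions on $\Gamma_0$ and cancelling the common term $\partial_{\nu_A}U$ yields the tangential identity
\[
-\frac{\mathrm d}{\mathrm d\sigma}\!\left(\tilde\eta\,\frac{\mathrm d U}{\mathrm d\sigma}\right)+\tilde\gamma\,U = 0 \quad \text{on }\Gamma_0,
\]
interpreted in $H^{-1}(\Gamma_0)$ and valid for every $U|_{\Gamma_0}\in \mathcal U$. Suppose for contradiction that $\tilde\eta\not\equiv 0$. Since $\tilde\eta\in C(\Gamma_0)$, there is an open arc $I\subset\Gamma_0$ on which $|\tilde\eta|\geq\delta>0$. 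Restricted to $I$, the displayed equation becomes, after setting $P=\tilde\eta U'$, the first-order Carath\'eodory system $U'=P/\tilde\eta$, $P'=\tilde\gamma U$ with bounded measurable coefficients, so its $H^1(I)$ solution space is two-dimensional. Hence $\mathcal U|_I$ sits inside a two-dimensional subspace of $L^2(I)$. On the other hand, Theorem~\ref{dense-set} at $s_0$ makes $\mathcal U$ dense in $L^2(\Gamma_0)$; approximating an arbitrary $L^2(I)$-function (extended by zero to $\Gamma_0$) by elements of $\mathcal U$ and then restricting to $I$ shows $\mathcal U|_I$ is dense in $L^2(I)$, a contradiction. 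Therefore $\tilde\eta\equiv 0$, the identity collapses to $\tilde\gamma U=0$ for every $U\in\mathcal U$, and density of $\mathcal U$ in $L^2(\Gamma_0)$ forces $\tilde\gamma\equiv 0$.

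The main obstacle is the weak unique continuation step in (b): it demands a Carleman-type theorem for $-\nabla\cdot A\nabla+(c+s_0^\alpha)$ under the stated regularity on $A$ and $c$, and it is the only place in the inverse analysis where Lipschitz regularity of $A$ is used. A secondary but easily handled point is that the ODE step in (c) really needs $\tilde\eta\in C(\Gamma_0)$ rather than merely $L^\infty$, so that $|\tilde\eta|\geq\delta>0$ can be arranged on an open arc---this clarifies why the hypothesis $(\eta,\gamma)\in C(\Gamma_0)\times L^\infty(\Gamma_0)$ is precisely the minimal regularity required, the extra smoothness on $\eta$ being forced by the second-order tangential operator it multiplies.
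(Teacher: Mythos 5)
Your argument is correct, but it reorganizes the identification step in a genuinely different way from the paper, so a comparison is worth recording. Both proofs agree up to the tangential identity $-\frac{\mathrm d}{\mathrm d\sigma}\big(\tilde\eta\,\frac{\mathrm d U}{\mathrm d\sigma}\big)+\tilde\gamma\,U=0$ in $H^{-1}(\Gamma_0)$, valid for every $U\in\mathcal U$ at a fixed $s_0\in\R_+$ with $G(s_0)\neq 0$ (the paper simply asserts $U^{(1)}=U^{(2)}$ from the coincidence of the Cauchy data on $\Gamma_1$, whereas you make the unique continuation step explicit --- correctly noting it is the one place the Lipschitz regularity of $A$ is needed in the inverse analysis). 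From there the paper works in the opposite order: it tests the identity against the constant function so the divergence term integrates to zero over the closed curve, and Theorem \ref{dense-set} gives $\gamma_1=\gamma_2$; it then bootstraps regularity (real-valued $f\in L^2(\Gamma_1)$ gives $U\in H^{3/2}(D_1)$, hence $\eta_1 U'\in H^1(\Gamma_0)$ and $U\in C^1(\Gamma_0)$), integrates $\frac{\mathrm d}{\mathrm d\sigma}(\tilde\eta U')=0$ to get $\tilde\eta U'=C$, kills the constant by periodicity and Rolle's theorem, and excludes $\tilde\eta\neq 0$ on an arc via the Wronskian non-degeneracy result quoted from \cite{2nd-order-inclusion}. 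You instead dispose of $\eta$ first by a dimension count: on an arc $I$ where $|\tilde\eta|\geq\delta>0$ the identity is a second-order ODE with an at most two-dimensional solution space, which is incompatible with the density of $\mathcal U|_I$ in $L^2(I)$ inherited from Theorem \ref{dense-set}; then $\tilde\gamma U=0$ on a dense subspace forces $\tilde\gamma=0$ since multiplication by $\tilde\gamma\in L^\infty(\Gamma_0)$ is bounded on $L^2(\Gamma_0)$. Your route is more economical --- it avoids the $H^{3/2}$ regularity discussion, Rolle's theorem, and the external Wronskian lemma, and it correctly isolates continuity of $\eta$ as the hypothesis that produces the arc $I$ --- while the paper's route needs only orthogonality to constants for the $\gamma$ step and documents exactly what boundary regularity of $U$ is available. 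Both ultimately rest on the same two pillars, namely Theorem \ref{dense-set} and the finite-dimensionality of the solution space of a second-order ODE on an arc, so yours is best viewed as a cleaner rearrangement of the same mechanism rather than a new method; there is no gap in it.
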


\begin{proof}
In order to prove the claim, we proceed by way of contradiction. So assume that there are two sets of impedance parameters denoted $(\eta_j , \gamma_j ) \in C(\Gamma_0) \times L^{\infty} (\Gamma_0)$ that produce the same NtD data for all $t>0$. Then we have that the corresponding NtD mappings 
 $$\Lambda_{j}=\Lambda(\eta_j , \gamma_j) \quad \text{for } \quad  j=1,2$$
 coincide for all $f \in H^{-1/2} (\Gamma_\text{1})$. Now defined the corresponding solutions to \eqref{direct1}$-$\eqref{direct2} by $u^{(j)}$ and it's Laplace Transform by $U^{(j)}$ that is the solution to \eqref{lt-direct1}$-$\eqref{lt-direct2}. Since the Cauchy data for $u^{(j)}$ on $\Gamma_\text{1}$ coincides for for all $t>0$ we have that $U^{(1)}=U^{(2)}$ in $D_1$ for all $s\in\C_{+}$ and for any $f \in H^{-1/2} (\Gamma_\text{1})$. We will assume that $s \in \R_{+}$ so that the Laplace Transforms of the solution are real-valued. Now denote $U= U^{(1)}=U^{(2)}$ which satisfies the  generalized impedance conditions 
$$ \partial_{\nu} U - \frac{ \text{d} }{\text{d} \sigma} {\eta_1} \frac{\text{d} }{\text{d} \sigma} U+ \gamma_1 U =\partial_{\nu} U -   \frac{ \text{d} }{\text{d} \sigma} {\eta_2} \frac{\text{d} }{\text{d} \sigma} U  + \gamma_2 U  =0 \quad \text{ on } \, \, \Gamma_0.$$
By subtracting the equations we obtain 
$$0= - \frac{ \text{d} }{\text{d} \sigma} {(\eta_1 -\eta_2)} \frac{\text{d} }{\text{d} \sigma} U+ (\gamma_1-\gamma_2) {U}  \quad \text{ on } \, \, \Gamma_0$$
and integrating over $\Gamma_0$ gives that  
$$ 0 = \int\limits_{\Gamma_0}  - \frac{ \text{d} }{\text{d} \sigma} {(\eta_1 -\eta_2)} \frac{\text{d} }{\text{d} \sigma} U+ (\gamma_1-\gamma_2) U \, \text{d} \sigma =  \int\limits_{\Gamma_0} (\gamma_1-\gamma_2)U \, \text{d} \sigma $$
where the equality comes from integration by parts with the arc length variable $\sigma$. Since the above equality holds for all  $f \in H^{-1/2} (\Gamma_\text{1})$ appealing Theorem \ref{dense-set} we can conclude that $\gamma_1 = \gamma_2$ a.e. on $\Gamma_0$.

Now assume that $f \in L^2(\Gamma_1) \subset H^{-1/2} (\Gamma_\text{1})$ is real-valued, then by the similar analysis as in Section 2 of \cite{2nd-order-inclusion} we can conclude that $U \in H^{3/2}(D_1)$ which implies that $\partial_\nu U \in L^2(\Gamma_0)$. Then the generalized impedance boundary condition implies that 
$$\eta_1 \frac{\text{d} U}{\text{d} \sigma}   \in H^1(\Gamma_0) \quad \text{ for all  } \quad f \in L^2(\Gamma_1)$$   
which implies that $U \in C^1(\Gamma_0)$ since $H^1(\Gamma_0) \subset C(\Gamma_0)$ and $\eta_1 \in C(\Gamma_0)$ with $\eta_1$ strictly positive. Since $\gamma_1 = \gamma_2$ subtracting the generalized impedance conditions gives  
\begin{align*}
\frac{ \text{d} }{\text{d} \sigma} {(\eta_1 -\eta_2)} \frac{\text{d} }{\text{d} \sigma} U  \, =0  \quad \text{ for all } \, \, \, f \in L^2(\Gamma_1). 
\end{align*}
Whence 
$$ {(\eta_1 -\eta_2)} \frac{\text{d} U }{\text{d} \sigma}  =C \quad \text{ for all } \, \, \, f \in L^2(\Gamma_1)$$
where $C$ is some constant. Now define $x(\sigma): [0,\ell] \mapsto \R^2$ as an $\ell$-periodic $C^2$ representation of the closed curve $\Gamma_0$ where $\ell$ is the length of the curve. Then we identify the space $H^1(\Gamma_0)$ with the auxiliary space $H^1_{\text{per}}[0,\ell]$ of $\ell$-periodic functions. It is clear that due to the periodic condition that $U \big( x(0)\big) = U \big( x(\ell)\big)$ for all real-valued $f  \in L^2(\Gamma_1)$. Rolle's Theorem gives that the tangential derivative for $U$ is zero for at least one point on the curve which gives that 
$$ {(\eta_1 -\eta_2)} \frac{\text{d} U}{\text{d} \sigma}  =0 \quad \text{ for all real-valued } \, \, \, f \in L^2(\Gamma_1).$$ 
Now to prove that $\eta_1=\eta_2$ we proceed by contradiction and assume that there is some $x^* \in \Gamma_0$ where $(\eta_1- \eta_2)(x^*) >0$. Due to the continuity there exist $\delta >0$ such that $(\eta_1- \eta_2) >0$ for all $x \in \Gamma_0^\delta = \Gamma_0 \cap B(x^*,\delta)$. We can conclude that 
\begin{align} \label{zero-derivative}
\frac{\text{d} U }{\text{d} \sigma}  =0  \quad \text{ on } \Gamma_0^\delta \quad \text{ for all real-valued } \, \, \, f \in L^2(\Gamma_1).
\end{align}
Now for any $f_1$ and $f_2$ linearly independent real-valued $L^2(\Gamma_1)$ functions, we have that the corresponding $U_{f_1}$ and $U_{ f_2}$ are linearly independent (see Theorem 2.2 in \cite{2nd-order-inclusion}). Therefore, we can conclude that the Wronskian given by 
$$\big(U_{f_1} , U_{ f_2} \big) \longmapsto U_{f_1} \frac{ \text{d} }{\text{d} \sigma}U_{ f_2}- U_{ f_2} \frac{ \text{d} }{\text{d} \sigma}U_{f_1}$$
can not be identically zero on any open subset of $\Gamma_0$. By \eqref{zero-derivative} we have that Wronskian is identically zero on $\Gamma_0^\delta$, which contradicts the linear independence of $f_1$ and $f_2$ proving the claim.  
\end{proof}

Notice that from the proof of Theorem \ref{unique} we have that Cauchy data for $f$ and $U(\cdot \, , s)$ on ${\Gamma_\text{1}}$ uniquely determines the impedance parameters. Assuming that the NtD, as well as $\Gamma_0$, is known this implies that we can use a data completion algorithm to recover $U_f (\cdot \, , s) $ and $\partial_{\nu_A} U_f (\cdot \, , s)$ on the inner boundary $\Gamma_0$. Recently, in \cite{data-completion} a stable data completion algorithm was derived using boundary integral equations for the Helmholtz equation. Provided that $A=I$ and $c=0$ the numerical method for recovering the interior Cauchy data in \cite{data-completion} can be employed for a given $s \in \R_{+}$. Once $U_f (\cdot \, , s) $ and $\partial_{\nu} U_f (\cdot \, , s)$ are known on $\Gamma_0$  we can employ the reconstruction algorithm in Section 4 of \cite{gibc-eit} to recover the impedance parameters. This method constructs a linear system of equations to recover the impedance parameters. This gives a direct method for recovering the parameters where one does not need a prior estimates for $\eta$ and $\gamma$. To do this, we need the compute the Laplace Transform of the data. This would require infinite temporal measurements on ${\Gamma_\text{1}}$ which is not physically feasible. Therefore, we will show that one can take partial temporal measurements on the outer boundary ${\Gamma_\text{1}}$ to approximate the Laplace Transform of the NtD mapping.  

To this end, we now define the partial temporal NtD measurements on the outer boundary ${\Gamma_\text{1}}$. This is that mapping such that the spatial flux component  $f \in H^{-1/2} (\Gamma_\text{1})$  is mapped to 
$$\widetilde{u}_f( \cdot \, ,t)  \big|_{\Gamma_\text{1}} =  \left\{\begin{array}{cr}
 u_f ( \cdot \, ,t)  \big|_{\Gamma_\text{1}} \, \, & \, \, t \leq T \\ 
 0 \, \, & \, \,  t > T 
 \end{array} \right. \quad \text{ for some } \,\, T \geq 1.$$
It is clear that $\widetilde{u}_f ( \cdot \, ,t)  \big|_{\Gamma_\text{1}}  \in  \text{TD} \left[H^{1/2}(\Gamma_1) \right]$ and denotes the measured partial temporal data on the finite interval time-interval $(0,T)$. This can be seen as an approximation of the measured data where we extend that data for all unknown temporal values by zero. Note that we can write 
\begin{align}\label{partia-data}
\widetilde{u}_f( \cdot \, ,t)  \big|_{\Gamma_\text{1}} = \chi_{_{[0,T]}}(t) \, u_f  ( \cdot \, ,t)  \big|_{\Gamma_\text{1}} \quad \text{for all} \,\,\, t> 0 
\end{align}
 where $\chi$ is the indicator function. 
Now, we will estimate the error in the Laplace Transforms in the NtD measurements with respect to the finite time of measurements taken on $(0,T)$ where $T \geq 1$. 

\begin{theorem}\label{partial-stab}
Let $\widetilde{U}_f (\cdot \, ,s)  \big|_{\Gamma_\text{1}} \in H^{1/2}(\Gamma_1)$ denote the Laplace Transform of the partial temporal NtD measurements given by \eqref{partia-data} for any $f \in H^{-1/2} (\Gamma_\text{1})$. Then we have that there is a $m \in \N$ such that  
$$ \| {U}_f (\cdot \, ,s)-  \widetilde{U}_f (\cdot \, ,s) \|_{H^{1/2}(\Gamma_1)} \leq C T^m \text{e}^{-\text{Re}(s)T}   \|f\|_{H^{-1/2}(\Gamma_1)} \quad \text{ for any } \,\, T\geq 1$$
with the constant $C>0$ is independent of $T$ and $ f \in H^{-1/2} (\Gamma_\text{1})$.
\end{theorem}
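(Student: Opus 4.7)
The plan is to write the difference $U_f(\cdot,s) - \widetilde{U}_f(\cdot,s)$ as a tail Laplace integral, bound the time-dependent solution pointwise in $t$ via Theorem \ref{well-posed}, and then estimate the resulting scalar integral against $T^m \text{e}^{-\text{Re}(s) T}$.

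For the first step, since $\widetilde{u}_f(\cdot,t) = \chi_{[0,T]}(t)\, u_f(\cdot,t)$ from \eqref{partia-data}, applying the Laplace transform and taking the difference gives
$$ U_f(\cdot,s)\big|_{\Gamma_1} - \widetilde{U}_f(\cdot,s)\big|_{\Gamma_1} = \int\limits_T^\infty u_f(\cdot,t)\big|_{\Gamma_1} \text{e}^{-st}\,\text{d}t,$$
interpreted as an $H^{1/2}(\Gamma_1)$-valued Bochner integral. This representation is justified because Theorem \ref{well-posed} provides polynomial growth of $u_f$ in $t$, while the factor $\text{e}^{-st}$ decays exponentially for $s\in\C_+$, so the integrand is absolutely Bochner-integrable on $[T,\infty)$.

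For the second step, I pass the norm inside the integral and then apply the Trace Theorem together with Theorem \ref{well-posed} — applicable throughout the integration range since $t \geq T \geq 1$ — to obtain
$$\| u_f(\cdot,t)\|_{H^{1/2}(\Gamma_1)} \leq C\|u_f(\cdot,t)\|_{H^1(D_1,\Gamma_0)} \leq C\, t^{\alpha+|1-p|} \|f\|_{H^{-1/2}(\Gamma_1)}.$$
This reduces matters to estimating the scalar tail integral, for which I choose $m = \lceil \alpha + |1-p|\rceil \in \N$ so that $t^{\alpha+|1-p|} \leq t^m$ on $[T,\infty)$. A routine computation (substituting $v = t-T$ and expanding $(v+T)^m$ via the binomial theorem, or equivalently repeated integration by parts) then yields
$$\int\limits_T^\infty t^m \text{e}^{-\text{Re}(s) t}\,\text{d}t \leq C(s)\, T^m \text{e}^{-\text{Re}(s) T} \quad \text{for } T \geq 1,$$
with the constant depending on $s$ but independent of $T$, which closes the estimate.

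There is no substantive obstacle here: the entire argument is a matter of combining the exponential decay of the Laplace kernel on the tail $[T,\infty)$ with the polynomial time-growth estimate supplied by Theorem \ref{well-posed}. The only points requiring care are justifying the Bochner-integral representation of the difference and selecting an integer exponent $m$ large enough to absorb the (generally non-integer) exponent $\alpha + |1-p|$; once these are arranged, the tail estimate follows from an elementary incomplete-Gamma-type bound.
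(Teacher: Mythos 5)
Your proposal is correct and follows essentially the same route as the paper's proof: expressing the difference as the tail integral $\int_T^\infty u_f(\cdot,t)\big|_{\Gamma_1}\mathrm{e}^{-st}\,\mathrm{d}t$, bounding the integrand via the Trace Theorem and the polynomial-in-$t$ estimate of Theorem \ref{well-posed}, taking $m=\lceil \alpha+|1-p|\rceil$, and evaluating the resulting incomplete-Gamma-type integral by the binomial expansion with $T\geq 1$ absorbing the lower powers of $T$. The added remark on Bochner integrability is a harmless refinement not present in the paper.
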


\begin{proof}
We begin by noticing that 
$$\big[ u_f  ( \cdot \, ,t) - \widetilde{u}_f( \cdot \, ,t) \big] \big|_{\Gamma_\text{1}} = \big[ 1- \chi_{_{[0,T]}}(t) \big] u_f  ( \cdot \, ,t) \big|_{\Gamma_\text{1}}$$
for any $ f \in H^{-1/2} (\Gamma_\text{1})$. By taking the Laplace Transform on both sides we have that 
$$  {U}_f (\cdot \, ,s)-  \widetilde{U}_f (\cdot \, ,s)  = \int\limits_{T}^{\infty} u ( \cdot \, ,t)  \big|_{\Gamma_\text{1}} \text{e}^{-st} \, \text{d}t.$$ 
From the above equality we are able to estimate the $H^{1/2} (\Gamma_\text{1})$ norm. Therefore, we have that by the Trace Theorem 
\begin{align*}
\| {U}_f (\cdot \, ,s)-\widetilde{U}_f (\cdot \, ,s) \|_{H^{1/2}(\Gamma_1)} &\leq  \int\limits_{T}^{\infty} \| u ( \cdot \, ,t)  \|_{H^{1/2}(\Gamma_1)} \, \text{e}^{-\text{Re}(s)t} \, \text{d}t\\
													  & \leq C \int\limits_{T}^{\infty} \| u ( \cdot \, ,t)  \|_{H^{1}(D_1,\Gamma_0)} \, \text{e}^{-\text{Re}(s)t} \, \text{d}t.
\end{align*}
Now by the norm estimate in Theorem \ref{well-posed} we have that 
$$\| {U}_f (\cdot \, ,s)-\widetilde{U}_f (\cdot \, ,s) \|_{H^{1/2}(\Gamma_1)} \leq  C \|f\|_{H^{-1/2}(\Gamma_1)} \int\limits_{T}^{\infty} t^{\alpha +|1-p|}   \text{e}^{-\text{Re}(s)t} \, \text{d}t$$ 
since we have assumed that $T\geq 1$. We now let $m=\big\lceil{\alpha +|1-p|}\big\rceil$ and whence 
$$  \int\limits_{T}^{\infty} t^{m}   \text{e}^{-\text{Re}(s)t} \, \text{d}t =  \text{e}^{-\text{Re}(s)T} \sum\limits_{k=0}^{m} \binom{m}{k} \frac{(m-k)!}{\text{Re}(s)^{m-k+1}}T^k $$
which is obtained by the Binomial Theorem and using standard Calculus to evaluate the improper integral. Therefore, we can conclude that 
$$\| {U}_f (\cdot \, ,s)-\widetilde{U}_f (\cdot \, ,s) \|_{H^{1/2}(\Gamma_1)} \leq  C \|f\|_{H^{-1/2}(\Gamma_1)} \,  \text{e}^{-\text{Re}(s)T}  \sum\limits_{k=0}^{m} \binom{m}{k} \frac{(m-k)!}{\text{Re}(s)^{m-k+1}}T^k $$
 and again using the fact that $T\geq 1$ proves the claim. 
\end{proof} 
 
Notice that by Theorem \ref{partial-stab} we have that the Laplace Transform of the partial temporal finite time NtD measurements converge in the operator norm to the Laplace Transform of the NtD measurements for \eqref{lt-direct1}$-$\eqref{lt-direct2} as $T \to \infty$. This gives that for the case when  $A=I$ and $c=0$ one can use the stabilized data completion algorithm in \cite{data-completion} to recover the Cauchy data on the inner boundary for some fixed $T \gg 1$ and whence reconstruct the impedance parameters according to \cite{gibc-eit}.

\section{Summary and Conclusions}
Here we have studied the direct and inverse impedance problems for a sub-diffusion equations with a generalized impedance boundary condition. The analysis for the direct problem holds in both $\R^2$ and $\R^3$. The analysis uses the Laplace Transform to study the problem in the frequency-domain and to assure that one can use the inversion formula to infer the solvability in the time-domain. There is still a need to test numerical methods for solving the direct problem. The uniqueness results for the inverse impedance problem strongly depends on analysis unique to the $\R^2$ case. We have also discussed a possible method for recovering the impedance parameters for the NtD measurements on the outer boundary. The inversion algorithm uses a method for the case when the elliptic operator is given by the Laplacian. A numerical study for the proposed inversion algorithm also needs to be established.


\end{document}